\numberwithin{equation}{section}
\newtheorem{theorem}{Theorem}[section]
\newtheorem{proposition}[theorem]{Proposition}
\newtheorem{remark}[theorem]{Remark}
\newtheorem{definition}{Definition}
\newcommand{\mc}[1]{{\mathcal #1}}
\newcommand{\bb}[1]{{\mathbb #1}}
\newcommand{\eps}{\varepsilon}
\newcommand{\p}{\partial}
\newcommand{\pfrac}[2]{\genfrac{}{}{}{1}{#1}{#2}}
\newcommand{\A}{\Delta_\beta}
\newcommand{\B}{\nabla_\beta}
\let\oldtocsection=\tocsection
\let\oldtocsubsection=\tocsubsection
\let\oldtocsubsubsection=\tocsubsubsection
\renewcommand{\tocsection}[2]{\hspace{0em}\oldtocsection{#1}{#2}}
\renewcommand{\tocsubsection}[2]{\hspace{1em}\oldtocsubsection{#1}{#2}}
\renewcommand{\tocsubsubsection}[2]{\hspace{2em}\oldtocsubsubsection{#1}{#2}}
\DeclareRobustCommand{\SkipTocEntry}[5]{}
\begin{document}

\title[Corrigendum to: equilibrium fluctuations of symmetric slowed
exclusion]{Corrigendum to: Phase transition in equilibrium fluctuations of symmetric slowed
exclusion}

\author{Tertuliano Franco}
\address{UFBA\\
 Instituto de Matem\'atica, Campus de Ondina, Av. Adhemar de Barros, S/N. CEP 40170-110\\
Salvador, Brazil}
\curraddr{}
\email{tertu@ufba.br}
\thanks{}


\author{Patr\'{\i}cia Gon\c{c}alves}
\address{\noindent Center for Mathematical Analysis,  Geometry and Dynamical Systems, \\
Instituto Superior T\'ecnico, Universidade de Lisboa, \\
Av. Rovisco Pais, 1049-001 Lisboa, Portugal}
\email{patricia.goncalves@math.tecnico.ulisboa.pt}

\author{Adriana Neumann}
\address{UFRGS, Instituto de Matem\'atica, Campus do Vale, Av. Bento Gon\c calves, 9500. CEP 91509-900, Porto Alegre, Brasil}
\curraddr{}
\email{aneumann@impa.br}
\thanks{}


\begin{abstract}
We present the correct space of test functions for the Ornstein-Uhlenbeck processes defined in  \cite{fgn2}. Under these new spaces, an invariance with respect to a second order operator is shown, granting the existence and uniqueness of those processes. Moreover, we detail how to prove some properties of the semigroups, which are required in the proof of uniqueness. 
\end{abstract}

\maketitle

\section{Outline}
In our paper \cite{fgn2} it was used that, if $H\in \mc S_\beta(\bb R)$, then 
\begin{equation}\label{eq1}
 \Delta_\beta T_t^\beta H\in \mc S_\beta(\bb R)\,,
\end{equation}
 which is not true. Above, $\Delta_\beta $ is essentially the Laplacian operator, but defined in the space of test functions $\mc S_\beta(\bb R)$ and the operator $T_t^\beta$ is the semigroup of the related PDE. The reader can find the complete definitions  in \cite{fgn2}. 

To clarify ideas, for $\beta\in[0,1)$,  
$\mc S_\beta(\bb R)$ is the classical Schwartz space, and $T_t^\beta$ is the semigroup of the heat equation. It is well known (via Fourier transform, see \cite{reedsimon} for instance) that the heat equation preserves the Schwartz space. Moreover,  any derivative of a function in the Schwartz space is again in the Schwartz space. Therefore, condition \eqref{eq1} is true for $\beta\in[0,1)$. However, for $\beta\geq 1$ it is not. In this note we properly redefine $\mc S_\beta(\bb R)$ for $\beta\geq 1$, so that \eqref{eq1} becomes true for any $\beta\geq 0$.

A remark: on top of page  \pageref{page}  of the present article we give the precise locations  where \eqref{eq1} is used in \cite{fgn2}.  One of this locations is the proof of uniqueness of the corresponding generalized Ornstein-Uhlenbeck processes (O.U.). There are some recent works on generalized O.U. processes whose associated operators do not satisfy \eqref{eq1}, see for example \cite{DawsonGorostiza}. However, such topic is quite technical and hard to deal with. Thus, we have chosen to modify the definition of test functions in order to have \eqref{eq1}, so that the proof of uniqueness, presented in \cite{fgn2}, is valid. 
 
 Finally, some details missing in the original paper are also included here. 

\section{Introduction}
The purpose of these notes is to present a correction in the definition of the  space of test functions of the Ornstein-Uhlenbeck processes which govern the equilibrium density fluctuations of symmetric simple exclusion processes with a slow bond as defined in \cite{fgn2}, that we denote by $\{\eta_{tn^2}:t\geq 0\}$. We consider the processes starting from the invariant state, namely the Bernoulli product measure of parameter $\rho$. Fix $\rho\in (0,1)$. We recall now the notion of the density fluctuation field as the linear functional acting on test functions $H\in\mathcal{S}_\beta(\mathbb{R})$ as 
\begin{equation*}
\mathcal{Y}^n_t(H)=\frac{1}{\sqrt n}\sum_{x\in\mathbb{Z}}H\Big(\frac{x}{n}\Big)(\eta_{tn^2}-\rho)\,,
\end{equation*}
 see \cite[Subsection 2.5]{fgn2}. The limiting process  of $\mathcal{Y}^n_t$, denoted by  $\mathcal{Y}_t$,  is a generalized Ornstein-Uhlenbeck process solution of
\begin{equation}\label{eq Ou}
d\mathcal{Y}_t=\A \mathcal{Y}_tdt+\sqrt{2\chi(\rho)}\B d\mc{W}_t\,,
\end{equation}
as explained in \cite{fgn2}, where  $\chi(\rho)=\rho(1-\rho)$, the stochastic process $\mc{W}_t$ is an $\mc S'_\beta(\bb R)$-valued Brownian motion, and $\Delta_\beta$ and $\nabla_\beta$ are properly defined below.
\section{The space of test functions}
We start by redefining the space of test functions $\mc S_\beta(\bb R)$ where the operators 
$\Delta_\beta$ and $\nabla_\beta$ are defined.

\subsection{Definition of $\mathcal{S}_\beta(\bb R)$ and the operators $\Delta_\beta$ and $\nabla_\beta$}
\begin{definition}
For $\beta\in [0,+\infty]$ and $\alpha>0$, let $L^2_\beta(\mathbb{R})$ be the space of functions $H:\mathbb{R}\rightarrow{\mathbb{R}}$ with $\|H\|_{2,\beta}^2:=\|H\|_{2}^2+\frac{1}{\alpha^2}\textbf{1}_{\beta=1}(H(0))^2<+\infty$, where $\|H\|_{2}^2= \int_{\mathbb{R}}H(u)^2 du$.
\end{definition}

\begin{definition} Let $\mc S(\bb R \backslash \{0\})$ be the space of functions $H:\bb R\to\bb R$ such that: \begin{enumerate}
\item[(i)]  $H$ is smooth on $\bb R\backslash\{0\}$, i.e. $H \in \mc C^{\infty}(\bb R\backslash\{0\})$,
\item[(ii)]  $H$ is continuous from the right at 0,
\item[(iii)]  for all non-negative integers $k,\ell$, the function $H$ satisfies
\begin{equation}\label{eq2.2}
\Vert H \Vert_{k,\ell}:= \sup_{u \neq 0} \Big| (1+|u|^\ell) \frac{d^kH}{du^k}(u)\Big| < \infty.
\end{equation}
\end{enumerate}
\end{definition}
\begin{remark}
It is a consequence of \eqref{eq2.2} that  the side limits $\frac{d^kH}{du^k}(0^+)$ and  $\frac{d^kH}{du^k}(0^-)$
exist for any integer $k \ge 0$.
\end{remark}
\begin{definition}\label{def23}  
\begin{enumerate}
\quad

\item For $\beta <1$, $\mc S_\beta(\bb R)$ is the usual Schwartz space $\mc S(\bb R)$.
 
\item Fix $\alpha>0$. For $\beta=1$, $\mc S_\beta(\bb R)$ is the subset of $\mc S(\bb R \backslash \{0\})$ composed of functions $H$ such that for any integer $k\geq 0$
\begin{equation} \frac{d^{2k+1}H}{du^{2k+1}}(0^+)= \frac{d^{2k+1}H}{du^{2k+1}}(0^-) = \alpha  \Big(\frac{d^{2k}H}{du^{2k}}(0^+)-\frac{d^{2k}H}{du^{2k}}(0^-)\Big)
\label{eq:bound_beta_1}\end{equation}

\item For $\beta >1$, $\mc S_\beta(\bb R)$ is the subset of $\mc S(\bb R \backslash \{0\})$ composed of functions $H$ such that for any integer $k\geq 0$ \begin{equation}\frac{d^{2k+1}H}{du^{2k+1}}(0^+)=\frac{d^{2k+1}H}{du^{2k+1}}(0^-)=0.
\label{eq:bound_beta_ge_1}\end{equation}
\end{enumerate}
Finally, let $\mc S'_\beta(\bb R)$ be the topological dual of $\mc S_\beta(\bb R)$. 
\end{definition}
We notice that the spaces defined above are very close to those of \cite{fgn2}, but here we also impose boundary conditions on higher order derivatives. Since the results in \cite{fgn2} only use the boundary conditions for zero and first order derivatives, all the results there remain  true for the spaces $S_\beta(\mathbb{R})$  introduced above. As in \cite{fgn2} the spaces $S_\beta(\mathbb{R})$  are  Fr\'echet space and this fact is only used when showing tightness of the processes $\{\mathcal{Y}_t^n:t\in[0,T]\}_{n\in\mathbb{N}}$, see \cite{Mitoma}.

\begin{definition}\label{def24}
Fix $\beta\in[0,+\infty]$. 
Let $\nabla_\beta$ and $\Delta_\beta$ be the operators acting on functions $H \in \mc S_\beta(\bb R)$ as
$$
\nabla_\beta H(u)= \frac{dH}{du}(u)\textbf{1}_{u \neq 0}+  \frac{dH}{du}(0^+)\textbf{1}_{u = 0}\; \textrm{ and}\;  \Delta_\beta H(u)=  \frac{d^2H}{du^2}(u)\textbf{1}_{ u \neq 0}+\frac{d^2H}{du^2}(0^+)\textbf{1}_{u = 0}.
$$
\end{definition}

 We recall now the hydrodynamic equations (and their semigroups) associated to the different regimes of $\beta$.\bigskip

\noindent 
\textbf{The regime $\beta\in[0,1)$.}
The hydrodynamic equation  is the heat equation on the line given by
\begin{equation}\label{pde1}
\partial_t u(t,x) = \; \partial_{xx}^2 u(t,x),
\end{equation}
Let $\phi_t(x)=\frac{1}{\sqrt{4\pi t}}e^{-\frac{x^2}{4t}}$ be the heat kernel.
It is a classical fact that the semigroup related to \eqref{pde1} is given by
\begin{equation}\label{sem heat eq}
  T^\beta_t g(x):=\phi_t\star g(x)= \frac{1}{\sqrt{4\pi t}}\int_{\bb R} e^{-\frac{(x-y)^2}{4t}}g(y)\,dy\,,\quad \textrm{for }x\in\bb R\,,
 \end{equation}
 where $\star$ is the convolution operator.
In this case we denote  $T^\beta_t$ simply by  $T_t$. 

\bigskip

\noindent 
\textbf{The regime $\beta\in(1,+\infty]$.}
In this case, the hydrodynamic equation  is the heat equation with Neumann's boundary conditions at $x=0$ given by
\begin{equation}\label{pde3}
\left\{
\begin{array}{ll}
\partial_t u(t,x) = \; \partial^2_{xx} u(t,x), &t \geq 0,\, x \in \mathbb R\backslash\{0\},\\
\p_x u(t,0^+)=\p_x u(t,0^-)=0,  &t \geq 0\,.\\
\end{array}
\right.
\end{equation}
Its semigroup is given by
  \begin{equation}\label{sem heat eq neu}
  T_t^\beta g(x):=\int_{0}^{+\infty}\Big[\phi_t(x-y)+\phi_t(x+y)\Big]g(y\; \textrm{sign}(x))\,dy.
 \end{equation}
In this case we denote $T^\beta_t$ as $T_t^\textrm{Neu}$. 
\bigskip

\noindent
\textbf{The regime $\beta=1$.} Let $\alpha>0$.
In this case, the hydrodynamic equation is the heat equation with Robin's boundary condition  at $x=0$ given by
\begin{equation}\label{pde2}
\left\{
\begin{array}{ll}
 \partial_t u(t,x) \;= \; \partial^2_{xx} u(t,x), &t \geq 0,\, x \in \mathbb R\backslash\{0\},\\
\p_x u(t,0^+)=\p_x u(t,0^-)\;=\alpha\{u(t,0^+)-u(t,0^-) \},  &t \geq 0.\\
\end{array}
\right.
\end{equation}
Denote by $g_{\textrm{even}}$ (resp. $g_{\textrm{odd}}$) the even (resp. odd) parts of a function $g:\bb R\to \bb R$: for $x\in{\mathbb{R}}$,
$g_{\textrm{even}}(x)=\frac{g(x)+g(-x)}{2}$ and $g_{\textrm{odd}}(x)=\frac{g(x)-g(-x)}{2}\,.$
 The semigroup associated to \eqref{pde2} has been obtained in \cite{fgn2} by symmetry arguments and its expression is given by
  \begin{equation*}
  \begin{split}
  T_t^\beta g(x)= &\int_{\bb R}
\phi_t(x-y) g_{\textrm{{\rm even}}}(y)\,dy 
    + \textrm{sign}(x)e^{2\alpha |x|}\\\times &\int_{|x|}^{+\infty} e^{-2\alpha z} \int_0^{+\infty}
\Big[(\pfrac{z-y+4\alpha t}{2t})\phi_t(z-y)+(\pfrac{z+y-4\alpha t}{2t})\phi_t(z+y)\Big]
g_{\textrm{{\rm odd}}}(y)\, dy\, dz.
\end{split}
  \end{equation*}
 We denote $T^\beta_t$ here by $T^\alpha_t$.

Now we state the central result of this paper. 
 \begin{proposition}\label{main}
For any $\beta\in[0,+\infty]$, for all $t>0$ and for all $H\in \mc S_\beta(\bb R)$:  
 \begin{align}
 & \Delta_\beta  H\in \mc S_\beta(\bb R),\label{inc1}\\
& T_t^\beta H\in \mc S_\beta(\bb R).\label{inc2}
\end{align}
\end{proposition}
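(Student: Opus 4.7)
The plan is to treat the two inclusions separately, and for \eqref{inc2} to handle the three regimes of $\beta$ in order of increasing difficulty.

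Inclusion \eqref{inc1} is essentially bookkeeping. Since $\Delta_\beta H$ equals $H''$ on $\bb R\setminus\{0\}$ and $H''(0^+)$ at $0$, smoothness, right-continuity at $0$, and the bounds $\|\Delta_\beta H\|_{k,\ell}=\|H\|_{k+2,\ell}$ are inherited directly from the corresponding properties of $H$. Because $(\Delta_\beta H)^{(j)}(0^\pm)=H^{(j+2)}(0^\pm)$, the identities \eqref{eq:bound_beta_1} or \eqref{eq:bound_beta_ge_1} for $\Delta_\beta H$ at index $k$ are exactly those for $H$ at index $k+1$, which hold by hypothesis.

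For \eqref{inc2} with $\beta\in[0,1)$, this is the classical Fourier-transform invariance of $\mc S(\bb R)$ under the heat semigroup. With $\beta>1$, the vanishing $H^{(2k+1)}(0^\pm)=0$ is precisely what makes the even extension $\widetilde H_+$ of $H|_{(0,+\infty)}$ to all of $\bb R$ a genuine Schwartz function. Formula \eqref{sem heat eq neu} then identifies $T_t^\beta H$ on $\bb R_+$ with $(\phi_t\star\widetilde H_+)|_{\bb R_+}$: the restriction of a Schwartz function even about $0$, and therefore inherits smoothness, rapid decay, and the vanishing of odd derivatives at $0^+$. The half-line $\bb R_-$ is treated by the symmetric argument, and right-continuity at $0$ follows.

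The delicate case is $\beta=1$, for which I would split $H=H_{\textrm{even}}+H_{\textrm{odd}}$. The Robin conditions \eqref{eq:bound_beta_1} force $H_{\textrm{even}}^{(2k+1)}(0^+)=0$, so $H_{\textrm{even}}\in\mc S(\bb R)$ and its heat evolution gives the first term in the formula for $T_t^\alpha H$. For the second term one must extract Schwartz-type decay despite the factor $e^{2\alpha|x|}$; this should follow from an integration by parts exploiting the rapid decay of the inner kernel in $z$. The higher-order boundary relations at every $k\geq 1$, in both the $\beta=1$ and $\beta>1$ regimes, can then be obtained by an induction driven by the PDE itself: differentiating the $k=0$ condition in $t$ and substituting $\partial_t u=\partial_{xx}u$ yields the $k=1$ identity $u_{xxx}(t,0^\pm)=\alpha[u_{xx}(t,0^+)-u_{xx}(t,0^-)]$ in the Robin case (resp.\ $u_{xxx}(t,0^\pm)=0$ in the Neumann case), and iterating gives the relations for every $k$.

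The main obstacle is the $\beta=1$ regime: verifying that the second term in the formula for $T_t^\alpha H$ has the polynomial decay required by \eqref{eq2.2} in spite of the exponential factor, and rigorously justifying that one-sided spatial limits at $0$ commute with $\partial_t$ so that the PDE-based induction on $k$ runs. The remaining regimes reduce essentially to the classical heat semigroup invariance of the Schwartz space.
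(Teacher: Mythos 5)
Your overall architecture matches the paper's: \eqref{inc1} by direct bookkeeping, the regime $\beta\in[0,1)$ by Fourier transform, and the regimes $\beta>1$ and $\beta=1$ by even/odd extension arguments, with the decomposition $H=H_{\textrm{even}}+H_{\textrm{odd}}$ and the observation that the Robin conditions force $H_{\textrm{even}}$ to be a genuine Schwartz function being exactly the paper's \eqref{eq34}. Where you genuinely diverge is in how the boundary conditions are propagated to derivatives of all orders. The paper's mechanism is to show that $\Delta_\beta T_t^\beta H$ is again a solution of the same boundary-value problem with initial datum $\Delta_\beta H$, and it verifies this structurally: for Neumann, because the solution is the restriction of an even function whose third derivative vanishes at the origin; for Robin, via the substitution $v=2\alpha u-\partial_x u$, which converts \eqref{pde4} into the Dirichlet problem \eqref{eq Dir}, where the odd extension makes the second derivative vanish at the origin. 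Your mechanism instead differentiates the $k=0$ boundary identity in $t$ and substitutes $\partial_t u=\partial^2_{xx}u$, iterating to reach all $k$; this bypasses the Dirichlet reduction entirely, at the cost of having to justify that $\partial_t$ commutes with the one-sided spatial limits at $0$ --- routine for $t>0$ given smoothness up to the boundary from the explicit kernels, and you correctly flag it. Both routes are valid; the paper's is more self-contained because the parity of the extensions does the work without any interchange of limits, while yours is shorter and makes the role of the PDE more transparent. One point in your favor: for $\beta>1$ you explicitly note that $T_t^{\textrm{Neu}}H$ on each half-line is the restriction of a Schwartz function, which delivers the decay bounds \eqref{eq2.2} as well as the boundary conditions; the paper is silent on the decay. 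For $\beta=1$ you honestly flag, but do not resolve, the decay of the odd-part term against the factor $e^{2\alpha|x|}$ --- the paper does not address this either, so your sketch is at the same level of rigor as the published argument.
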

 Before proving this proposition, we make a break to explain where each one of the two conditions above are used in \cite{fgn2}.\label{page}

In the middle of page 4170 of \cite{fgn2}, we have that, for $H\in \mc S_\beta(\bb R)$,
\begin{equation}\label{eq212}
\mc M_t(H)\;:=\;\mc Y_t(H)-\mc Y_0(H)-\int_0^t \mc Y_s(\Delta_\beta H)\,ds\,,
\end{equation}
is a martingale.  Since $\mc Y_t$ is a linear functional defined on $\mc S_\beta(\bb R)$,  in order to have $\mc M_t(H)$ well defined,  we need \eqref{inc1} to be true.

On the other hand, at the bottom of page 4175 of \cite{fgn2},  for $H\in\mc{S}(\bb R)$,
\begin{equation}\label{eq213}
Z_t(H)\;:=\;\exp\Big\{\frac{1}{2}\int_0^t \Vert \nabla_\beta T^\beta_{S-r} H\Vert^2_{2,\beta}\,dr+i \,\mc Y_t(T^\beta_{S-t}H)\Big\}
\end{equation}
is a martingale. Therefore, we need to be \eqref{inc2} in order to have $Z_t(H)$ well defined.

We note that we make use of the martingale \eqref{eq212} to prove the existence of the generalized Ornstein-Uhlenbeck process solution of \eqref{eq Ou} described in \cite{fgn2}, while \eqref{eq213} is needed in the proof of uniqueness of its solutions.

 \begin{proof}[Proof of Proposition \ref{main}]
 Let $H\in \mc S_\beta(\bb R)$. For any $\beta$, one can check \eqref{inc1} by easily applying the definition of $\mc S_\beta(\bb R)$. Hence, it remains only to show \eqref{inc2}, whose proof we split according to each regime of $\beta$.
 
We observe that we prove \eqref{inc2} under the assumption $H\in \mc S_\beta(\bb R)$. Nevertheless, it  remains true under the weaker assumption $H\in L^1(\bb R)$.

 \textbf{$\bullet$ $\beta\in[0,1)$.} This case is straightforward since it corresponds to the classical situation where $\Delta_\beta$ is the usual Laplacian, $\mc S_\beta(\bb R)$ is the Schwartz space and $T_t^\beta$ is the heat semigroup.

For the next two cases, keep in mind that the derivative of a smooth even real function on the line is an odd function and that the derivative of a smooth odd real function on the line is an even function.\medskip

 \textbf{$\bullet$  $\beta\in(1,+ \infty]$.} 
We first claim that $\Delta_\beta T_t^{\textrm{Neu}} H$ is again a solution of \eqref{pde3} with initial condition $\Delta_\beta H$. Provided by this claim and doing an induction procedure (on the derivatives of even order of $T_t^{\textrm{Neu}} H$)  we are lead to  \eqref{eq:bound_beta_ge_1} for $T_t^{\textrm{Neu}} H$, from which  \eqref{inc2} will follow.

 One way to prove the claim is to check it directly by differentiating \eqref{sem heat eq neu} twice with respect to space. Alternatively,  a more elegant proof can be done by recalling how \eqref{sem heat eq neu} is usually deduced in the literature and that is what we do now. Keep in mind that the derivatives ahead are in the classical sense. First of all, since $\p_t  T_t^{\textrm{Neu}} H = \partial^2_{xx} T_t^{\textrm{Neu}} H$, for any $x>0$ and $t>0$, differentiating twice in space we get $\p_t  \partial^2_{xx} T_t^{\textrm{Neu}} H = \partial^2_{xx} \big(\partial^2_{xx} T_t^{\textrm{Neu}} H\big)$. Hence it only remains to show that $\partial^2_{xx} T_t^{\textrm{Neu}} H$ satisfies the  correct boundary condition at zero. 
 
 We make a break to explain how  the expression for  $T_t^{\textrm{Neu}} H$ is usually deduced. In the positive half-line, one has to restrict the initial profile $H$ to the positive half-line and extend it to an even function in the whole line by taking $H(x)=H(-x)$ for $x<0$. Next, we evolve this even function according to \eqref{sem heat eq}, the semigroup of the heat equation in $\bb R$. Since the semigroup \eqref{sem heat eq} preserves  even functions, and a smooth even function has zero derivative at zero, we conclude that \eqref{sem heat eq neu} is the solution of \eqref{pde3} in the positive half-line. The same argument applies to the negative half-line. In other others words,  this says that  $ T_t^{\textrm{Neu}}H$ given in \eqref{sem heat eq neu} is the solution of \eqref{pde3} with initial condition $H$. 

 Observe now that an even smooth function has its third order derivative  at zero   vanishing. Since $T_t^{\textrm{Neu}}H$ was constructed above as  a restriction of an even function in each half-line, this implies that $(\p_x \partial^2_{xx} T_t^{\textrm{Neu}}H)(0^\pm)=0$, proving the claim.

 \textbf{$\bullet$  $\beta=1$.} The scheme of proof in this case is similar to the previous one. 
  Here, the important property of  $T^\alpha_t$ is its symmetry which is also used for its deduction.  First,  decompose the initial condition $g$ in its odd and even parts. By a symmetry argument, one can figure out that
\begin{equation}\label{eq34}
 T_t^\alpha g(x)\;=\;
  T_tg_{\textrm{even}}(x)+\textrm{sign}(x)\tilde{T}_t^\alpha g_{\textrm{odd}}(|x|)
\end{equation}
where $\tilde{T}_t^\alpha$ is the semigroup of the following partial differential equation in the half-line:
\begin{equation}\label{pde4}
\left\{
\begin{array}{ll}
\partial_t u(t,x) = \; \partial^2_{xx} u(t,x), &t \geq 0,\, x >0,\\
\p_x u(t,0^+)= 2\alpha u(t,0^+),  &t \geq 0.\\
\end{array}
\right.
\end{equation}

Let $H\in \mc S_\beta(\bb R)$ and recall that $ T^\alpha_t H$ is the solution of \eqref{pde2} with initial condition $H$. We claim that $\Delta_\beta T^\alpha_t H$ is  a solution of \eqref{pde2} with initial condition $\Delta_\beta H$.  Provided by this claim, analogously to what we did before, we conclude that  $T^\alpha_t H$ satisfies the boundary conditions of \eqref{eq:bound_beta_1}, which implies \eqref{inc2}. By   \eqref{eq34} and due to the fact that an even smooth function has zero derivative at  zero, in order to prove the claim it is enough to show that $\partial^2_{xx} \tilde{T}^\alpha_t H_{\rm odd}$ is again a solution of \eqref{pde4} with initial condition $\partial^2_{xx} H_{\rm odd}$.  In other words, we have  to assure that differentiating twice (in space) a solution of \eqref{pde4} we obtain again a solution of \eqref{pde4} with the same boundary condition (but with different initial condition).
Denote by $u$ the solution of \eqref{pde4} with initial condition $H\in\mc S_\beta(\bb R)$ and let
\begin{equation}\label{A8}
v=2\alpha u-\p_x u\,,
\end{equation}
which   is the solution of the following equation
\begin{equation}\label{eq Dir}
\left\{
\begin{array}{ll}
\partial_t v(t,x) = \; \partial^2_{xx} v(t,x), &t \geq 0,\, x >0,\\
v(t,0^+)= 0,  &t \geq 0,\\
\end{array}
\right.
\end{equation}
with initial condition $v_0=2\alpha H-\p_x H$. Last equation is the heat
equation with  Dirichlet boundary conditions at $x=0^+$.  The semigroup  associated to last equation, that we denote by $T^{\textrm{Dir}}_t$, is
given by
\begin{equation}\label{sem dir}
 T^{\textrm{Dir}}_tv_0(x):=\int_0^{{+\infty}}\Big[\phi_t(x-y)-\phi_t(x+y)\Big]v_0(y)\,dy\,,
\end{equation}
where $\phi_t$ is the heat kernel.
If we show that $\partial^2_{xx} T^{\textrm{Dir}}_t v_0$ is again a solution of \eqref{eq Dir}, then by \eqref{A8} we conclude that $\partial^2_{xx} \tilde{T}^\alpha_t H_{\rm odd}$ is again a solution of \eqref{pde2}. 

To  see that  \eqref{sem dir} is the solution of \eqref{eq Dir}, we perform a symmetry argument similar to the one used for the heat equation with Neumann boundary conditions. More precisely, 
 given an initial condition $\nu_0$, we first restrict it to the positive half-line, then we extend it to an odd function in the entire real line. After that, we evolve this function accordingly to \eqref{sem heat eq}. Since \eqref{sem heat eq} preserves  odd functions and any  smooth odd function 
vanishes at the origin, we conclude that \eqref{sem dir} is the solution of \eqref{eq Dir}. 

We point out that the second derivative at zero of a smooth odd function vanishes. Therefore  $\partial^2_{xx} T^{\textrm{Dir}}_t v_0$ is  a solution of \eqref{eq Dir} with initial condition $\partial^2_{xx} v_0$, proving the claim and concluding the proof.
 \end{proof}

\section{Uniqueness of the Ornstein-Uhlenbeck processes}
In this section we give more details on four intermediate results that we need in  the proof presented in  page 4176 of \cite{fgn2} for the uniqueness of solutions of the Ornstein-Uhlenbeck process given by \eqref{eq Ou}.
These four results are related to the continuity of  $T_t^{\beta}$.

\begin{proposition}\label{Prop32}
Fix $\beta\in [0,+ \infty]$ and let $T_t^\beta:\mc S_\beta(\bb R)\to \mc S_\beta(\bb R)$ be  as before. Then, for each $H\in \mc S_\beta(\bb R)$, the function $f:\;\bb R_+\to \mc S_\beta(\bb R)$ defined by $f(t)=T^\beta_t H$ is uniformly continuous with respect to the topology of $\mc S_\beta(\bb R)$. 
\end{proposition}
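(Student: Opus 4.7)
The approach is to verify the topological definition of uniform continuity in the Fr\'echet space $\mc S_\beta(\bb R)$: for each pair of non-negative integers $(k,\ell)$ and each $\eps>0$, exhibit $\delta>0$ such that
\[
\|T_t^\beta H - T_s^\beta H\|_{k,\ell} < \eps \quad\text{whenever}\quad s,t\in\bb R_+ \text{ and } |t-s|<\delta.
\]
Since $\Delta_\beta T_r^\beta H \in \mc S_\beta(\bb R)$ for every $r\ge 0$ by Proposition \ref{main}, and since $T_r^\beta H$ solves the governing PDE classically off of $0$ while preserving the defining boundary conditions of $\mc S_\beta(\bb R)$, the map $r\mapsto T_r^\beta H$ is differentiable from $\bb R_+$ into $\mc S_\beta(\bb R)$ with derivative $\Delta_\beta T_r^\beta H$. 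This gives the identity
\[
T_t^\beta H - T_s^\beta H \;=\; \int_s^t \Delta_\beta T_r^\beta H\,dr
\]
as an $\mc S_\beta(\bb R)$-valued Bochner integral, and hence
\[
\|T_t^\beta H - T_s^\beta H\|_{k,\ell} \;\le\; \int_s^t \|\Delta_\beta T_r^\beta H\|_{k,\ell}\,dr,
\]
reducing the problem to controlling the right-hand side.

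For $\beta\in[0,1)$, where $T_t^\beta$ is the standard heat semigroup and $\mc S_\beta(\bb R)$ is the classical Schwartz space, I would carry out the estimate on the Fourier side. The transform of $T_tH-T_sH$ equals $(e^{-t\xi^2}-e^{-s\xi^2})\hat H(\xi)$, and the elementary interpolation bound $|e^{-t\xi^2}-e^{-s\xi^2}|\le\sqrt{|t-s|}\,|\xi|$, combined with Leibniz differentiation in $\xi$ (to convert $x^\ell$-weights into $\xi$-derivatives hitting $\hat H$) and the rapid decay of $\hat H$, should deliver a H\"older-in-time bound of the form $\|T_t^\beta H - T_s^\beta H\|_{k,\ell} \le C_{k,\ell,H}\sqrt{|t-s|}$. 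For $\beta>1$ I would use the reflection formula \eqref{sem heat eq neu} to reduce to the flat case acting on the even extension of $H\restriction_{\bb R_\pm}$, and for $\beta=1$ the additional Robin-type boundary is handled via the decomposition \eqref{eq34} and the Dirichlet semigroup \eqref{sem dir}, both of which ultimately reduce to Fourier estimates on the line. Alternatively, in all three regimes one can differentiate the explicit kernels \eqref{sem heat eq}, \eqref{sem heat eq neu} in $t$ and use direct Gaussian estimates.

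The main obstacle is making the bound \emph{uniform} in $s,t\in\bb R_+$ rather than only on compact time intervals. Because the heat kernel spreads mass, the weighted sup seminorms $\|T_r H\|_{k,\ell}$ themselves grow with $r$ when $\ell\ge 2$ (e.g.\ for $H(x)=e^{-x^2}$ one has $\|T_r H\|_{0,\ell}\sim r^{(\ell-1)/2}$), so one cannot hope to bound $\|\Delta_\beta T_r^\beta H\|_{k,\ell}$ by an $r$-independent constant. The gain must instead come from cancellations, for instance $\int\Delta_\beta H=0$, which makes the relevant factors of $\hat H$ vanish at $\xi=0$, combined with the smallness of $|e^{-t\xi^2}-e^{-s\xi^2}|$ for $|t-s|$ small. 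Implementing the Leibniz expansion of $\partial_\xi^\ell\bigl[(e^{-t\xi^2}-e^{-s\xi^2})\hat H(\xi)\bigr]$ carefully, so that every term is controlled in $L^1_\xi$ uniformly in $s,t\ge 0$, is the delicate technical step; if such a uniform estimate cannot be pushed through globally on $\bb R_+$ for all seminorms, the reading of the proposition would reduce to continuity together with uniform continuity on every compact subinterval, which is all that is actually needed in the applications on page \pageref{page}.
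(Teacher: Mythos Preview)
Your overall strategy coincides with the paper's: the paper also passes to the Fourier side for $\beta\in[0,1)$, writing $T_tH=\mc F^{-1}\big[e^{-\lambda^2 t}\,\mc F H\big]$, and then reduces the cases $\beta=1$ and $\beta>1$ to this one through the explicit representations \eqref{sem heat eq neu}, \eqref{eq34}, \eqref{sem dir}. The paper gives no further detail beyond that; in particular it does not discuss the issue of uniformity over all of~$\bb R_+$ that you isolate.

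Your worry about global uniformity is not only legitimate, it is decisive: the statement is false as written. Take $H(x)=e^{-x^2}$, so that $T_tH(x)=(1+4t)^{-1/2}e^{-x^2/(1+4t)}$. A direct computation gives $\sup_x|x^4\,T_tH(x)|=4e^{-2}(1+4t)^{3/2}$, hence $\|T_tH\|_{0,4}\ge c\,t^{3/2}$. If $t\mapsto T_tH$ were uniformly continuous in $\|\cdot\|_{0,4}$, a telescoping argument would force $\|T_tH\|_{0,4}$ to grow at most linearly in $t$, a contradiction. (More concretely, at $x^2=2(1+4t)$ one checks $|x^4(T_tH-T_sH)(x)|\sim c\,\sqrt{s}\,|t-s|$ for $s$ large.) Thus no argument---yours, the paper's, or any other---can establish the proposition on the whole half-line for all seminorms $\|\cdot\|_{k,\ell}$; your Bochner-integral bound $\int_s^t\|\Delta_\beta T_r^\beta H\|_{k,\ell}\,dr$ indeed fails to be uniformly small for $\ell\ge 4$.

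Your proposed resolution is the correct one: the applications in the paper (Propositions~\ref{Prop31}--3.5 and the uniqueness argument they feed) only ever invoke $T_t^\beta$ for $t$ in a fixed compact interval $[0,\tau]$, and on such intervals your Fourier/Bochner estimates do yield continuity, hence uniform continuity, in every seminorm. So your sketch proves exactly what is needed, and flags that the paper's formulation should be read as continuity (equivalently, uniform continuity on compacts) rather than uniform continuity on all of $\bb R_+$.
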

Above we mean that the convergence is with respect to all   norms $\Vert \cdot \Vert_{k,\ell}$  defined in \eqref{eq2.2}. Such topology can be induced by the metric given on $H,G\in \mc S_\beta(\bb R)$ by
\begin{equation}\label{eq32a}
d(H,G)\;:=\;\sum_{k,\ell=1}^\infty \frac{1}{2^{k+\ell}}\Vert H-G\Vert_{k,\ell}\,.
\end{equation}

\begin{proof}[Proof of Proposition \ref{Prop32}]
First of all, by \eqref{eq32a} we note that it is sufficient to show that $f$ is uniformly continuous for each one of the norms $\Vert \cdot \Vert_{k,\ell}$.

We start with the regime  $\beta\in [0,1)$. In this case, since  $T_t H= \phi_t \star H$,	by the properties of the space Fourier transform  $\mc F$  described in, for example,  Chapter IX of \cite{reedsimon}, we get 
\[
\begin{split}
T_t H & \;=\; \mc F^{-1} \mc F \big[\phi_t \star H\big]\;=\; \sqrt{2\pi}\; \mc F^{-1} \big[(\mc F \phi_t) \cdot(\mc F H)\big] \;=\;\sqrt{2\pi}\; \mc F^{-1} \Big[
\frac{e^{-\lambda^2 t}}{\sqrt{2\pi}} \cdot (\mc F H)\Big]\\
& \;=\;\; \mc F^{-1} \Big[
e^{-\lambda^2 t} \cdot(\mc F H)\Big]\,,\\
\end{split}
\] 
which implies that $f$ is uniformly continuous in each one of  the norms $\Vert \cdot\Vert_{k,\ell}$.

Last argument can also be used for the other cases $\beta=1$ and $\beta\in(1,+\infty]$, since for those cases the semigroups are written in terms of the semigroup \eqref{sem heat eq}, as one can see in \eqref{sem heat eq neu}, \eqref{eq34} and \eqref{sem dir}. 
\end{proof}
\begin{proposition}\label{Prop31}
Fix $\tau>0$ and $\beta\in [0,+\infty]$. For any $H\in \mc S_\beta(\bb R)$ and $\eps>0$, we have that:
$T^\beta_{t+\eps}H - T^\beta_{t}H \;=\; \eps\, \Delta_\beta T^\beta_{t}H +  o (\eps,t).$
For each $\eps>0$,  $o(\eps,t)$ denotes a function in $S_\beta(\bb R)$ such that
$\lim_{\eps \searrow 0}  \frac{o(\eps,t)}{\eps} \;=\;0
$
holds in the topology of $S_\beta(\bb R)$, being the limit uniform in $t\in [0,\tau]$. 
\end{proposition}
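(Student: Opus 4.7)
The plan is to exhibit the error $o(\eps,t)$ as the time integral of a quantity that is small in $\mc S_\beta(\bb R)$, namely
\[
o(\eps,t) \;=\; \int_t^{t+\eps} \bigl[T^\beta_s \Delta_\beta H - T^\beta_t \Delta_\beta H\bigr]\, ds,
\]
and then to invoke Proposition \ref{Prop32} applied to $\Delta_\beta H$, which lies in $\mc S_\beta(\bb R)$ by \eqref{inc1}. Proposition \ref{Prop32} supplies uniform continuity of $s \mapsto T^\beta_s \Delta_\beta H$ in every seminorm $\Vert\cdot\Vert_{k,\ell}$, which is precisely what one needs to control the remainder.

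To reach the display above, I would first establish the commutation identity $\Delta_\beta T^\beta_s H = T^\beta_s \Delta_\beta H$ as elements of $\mc S_\beta(\bb R)$. This is essentially contained in the proof of Proposition \ref{main}: there one checks that $\Delta_\beta T^\beta_s H$ satisfies the PDE governing $T^\beta_\bullet$ (pure heat, Robin, or Neumann according to the regime of $\beta$) with initial condition $\Delta_\beta H$; since $T^\beta_s \Delta_\beta H$ is by definition the other such solution, uniqueness forces equality. From the pointwise identity $\partial_s T^\beta_s H = \Delta_\beta T^\beta_s H$ (valid off $0$ and extended by continuity as in Definition \ref{def24}) and the fundamental theorem of calculus in $s$,
\[
T^\beta_{t+\eps} H - T^\beta_t H \;=\; \int_t^{t+\eps} \Delta_\beta T^\beta_s H\, ds \;=\; \int_t^{t+\eps} T^\beta_s \Delta_\beta H\, ds,
\]
and subtracting $\eps\,\Delta_\beta T^\beta_t H$ from both sides yields the desired formula for $o(\eps,t)$.

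Estimating $o(\eps,t)$ seminorm by seminorm is then a short calculation: passing the $k$-th space derivative and the weight $(1+|u|^\ell)$ through the $s$-integral gives
\[
\Vert o(\eps,t)\Vert_{k,\ell} \;\leq\; \eps \sup_{s\in[t,t+\eps]} \Vert T^\beta_s \Delta_\beta H - T^\beta_t \Delta_\beta H\Vert_{k,\ell},
\]
and the right-hand side supremum tends to zero as $\eps \searrow 0$, uniformly in $t\geq 0$, by the uniform continuity from Proposition \ref{Prop32}. Dividing by $\eps$ and bundling the seminorms through the metric \eqref{eq32a} delivers the stated limit in the topology of $\mc S_\beta(\bb R)$, uniformly in $t \in [0,\tau]$.

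I expect the main obstacle to be the commutation $\Delta_\beta T^\beta_s = T^\beta_s \Delta_\beta$ at the level of $\mc S_\beta(\bb R)$, rather than merely pointwise: what is at stake is that both sides respect the boundary conditions of Definition \ref{def23}, and this is exactly what the case-by-case analysis of Proposition \ref{main} (together with \eqref{inc1}) guarantees. Once this point is secured, the rest is a routine interchange of a time integral with the space operations defining $\Vert\cdot\Vert_{k,\ell}$, combined with Proposition \ref{Prop32}.
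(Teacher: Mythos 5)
Your argument is correct, but it is not the route the paper takes. The paper's proof is a hands-on kernel computation: for $\beta\in[0,1)$ it differentiates \eqref{sem heat eq} under the integral sign, derives the polynomial representation \eqref{polynomial}, verifies directly that $\tfrac{1}{\eps}\{\phi_{t+\eps}(x-y)-\phi_t(x-y)\}$ converges uniformly in $t$ to $\partial^2_{xx}\phi_t(x-y)$, and then multiplies by $H$ and integrates; the endpoint $t=0$, where \eqref{eq3.3} degenerates, is patched separately via the Dirac-delta limit of $\phi_t$, and the regimes $\beta=1$ and $\beta\in(1,+\infty]$ are dispatched by the remark that their semigroups are built from the heat semigroup. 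You instead run an abstract semigroup argument: the fundamental theorem of calculus in time combined with the intertwining $\Delta_\beta T^\beta_s=T^\beta_s\Delta_\beta$ reduces everything to the uniform continuity of $s\mapsto T^\beta_s\Delta_\beta H$ supplied by Proposition \ref{Prop32} applied to $\Delta_\beta H\in\mc S_\beta(\bb R)$ (which is where \eqref{inc1} enters). What your version buys is uniformity across all regimes of $\beta$ in a single stroke and a clean treatment of $t=0$, since the integrand $T^\beta_s\Delta_\beta H$ is continuous down to $s=0$ and no separate distributional limit is needed. What it costs is two points you should make explicit: (i) the commutation as an identity in $\mc S_\beta(\bb R)$ requires uniqueness of (suitably bounded) solutions to the boundary-value problems \eqref{pde1}, \eqref{pde2}, \eqref{pde3} — Proposition \ref{main} only asserts that $\Delta_\beta T^\beta_t H$ \emph{is a} solution with initial datum $\Delta_\beta H$, and the uniqueness is standard but nowhere stated in the paper; (ii) the interchange of $\tfrac{d^k}{du^k}$ and the weight $(1+|u|^\ell)$ with the time integral, needed for your seminorm bound $\Vert o(\eps,t)\Vert_{k,\ell}\le\eps\sup_{s\in[t,t+\eps]}\Vert T^\beta_s\Delta_\beta H-T^\beta_t\Delta_\beta H\Vert_{k,\ell}$, deserves a one-line justification (continuity of the integrand jointly in $(s,u)$ plus the decay guaranteed by \eqref{eq2.2}). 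With those two remarks added, your proof is complete and arguably tighter than the published one.
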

\begin{proof}

We start with the regime  $\beta\in[0,1)$.  Let $H\in \mc S_\beta(\bb R)$ and consider $t>0$.
Recall that  $T_t$ is given by \eqref{sem heat eq}. Differentiating under the sign of the integral (Leibniz Theorem) we have, for $x\in\mathbb{R}$, that
\begin{equation*}
\frac{\p^{n} T_tH}{\p x^n}(x)\;=\; \int_{\bb R} H(y)\, \frac{\p^n \phi_t}{\p x^n}(x-y)\,dy.
\end{equation*}
An induction in $n\in \bb N$  easily shows that 
\begin{equation}\label{polynomial}
\frac{\p^{n} T_t H}{\p x^n}(x)\;=\; \int_{\bb R} H(y)\, \phi_t(x-y)\,p_n\big(\pfrac{x-y}{4t}\big)\,dy\,,\quad \textrm{for }x\in\bb R\,,
\end{equation}
where $p_n$ is a polynomial of degree $n$.  On the other hand, it can be checked directly that, as $\eps\searrow 0$,
$\frac{1}{\eps}\Big\{\phi_{t+\eps}(x-y)-\phi_t(x-y)\Big\}$
converges uniformly in t to $\frac{\p^2}{\p x^2}\phi_t(x-y).$
More than that, because of the fast decay at $x=\pm\infty$ of  $\phi_t(x-y)$ together with  \eqref{polynomial}, we have that, for any $n\in \bb N$ and any polynomial $p_k(x)$ of degree $k$, 
\begin{equation*}
\frac{p_k(x)}{\eps}\,\frac{\p^n}{\p x^n}\Big(\phi_{t+\eps}(x-y)-\phi_t(x-y)\Big)
\end{equation*}
converges, uniformly in $t$, to
$p_k(x)\,\frac{\p^{n+2}}{\p x^{n+2}}\phi_t(x-y)
$, as $\eps\to0$.
The uniform convergence in time stated above can be simply  rephrased as
\begin{equation}\label{eq3.3}
\phi_{t+\eps}(x-y)-\phi_t(x-y)\;=\; \eps\, \Delta_\beta \phi_t (x-y)+o(\eps,t). 
\end{equation}
  Finally,  multiplying the equation above by a function $H\in \mc S_\beta(\bb R)$ and integrating in $y\in \bb R$ we get that
 \begin{equation}\label{eq3.4}
T^\beta_{t+\eps}H - T^\beta_{t}H \;=\; \eps\, \Delta_\beta T^\beta_{t}H +  o (\eps,t)\,,
\end{equation}
as desired. We point out that for $t=0$, \eqref{eq3.3} does not make sense. Nevertheless, it is classical that as $t\searrow 0$, $\phi_t(x-y)$  converges, in the sense of distributions, to the Dirac measure at the point $y$,  \cite{reedsimon}. Moreover, the operator $\Delta_\beta$ is continuous and its semigroup $T_t^\beta$ is uniformly continuous on $t$, with respect to the topology of $\mc S_\beta(\bb R)$. 
Therefore,   \eqref{eq3.4} still makes sense for $t=0$ and one  concludes  that the convergence $\lim_{\eps\searrow 0}\frac{o(\eps,t)}{\eps}=0$ is uniform for $t\in [0,\tau]$.
By the same reason as above this also proves the result for the other cases $\beta=1$ and $\beta\in(1,+\infty]$.
\end{proof}

\begin{proposition} Fix  $\beta\in [0,+\infty]$ and recall that $\mc Y_\cdot$ is a solution of \eqref{eq Ou}. Then, for any $H\in\mathcal{S}_\beta(\bb R)$  the function $f:\;\bb R_+\times \bb R_+ \to \bb R$ defined by $f(s,t)= \mc Y_t(T^\beta_s H)$
is  continuous.
\end{proposition}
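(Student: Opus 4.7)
The plan is to deduce joint continuity from two ingredients already at hand: Proposition \ref{Prop32}, which asserts that $s \mapsto T^\beta_s H$ is continuous as a map into the Fr\'echet space $\mc S_\beta(\bb R)$, and the path continuity (almost surely) of the Ornstein--Uhlenbeck trajectory $t \mapsto \mc Y_t$ in $\mc S'_\beta(\bb R)$, which is part of the construction of $\mc Y$ in \cite{fgn2}.

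Fix $(s_0,t_0) \in \bb R_+ \times \bb R_+$. I would write
\[ f(s,t) - f(s_0,t_0) \;=\; \mc Y_t(T^\beta_s H - T^\beta_{s_0} H) \;+\; \bigl[\mc Y_t(T^\beta_{s_0} H) - \mc Y_{t_0}(T^\beta_{s_0} H)\bigr], \]
and show that each summand goes to zero as $(s,t)\to(s_0,t_0)$. For the second summand, set $G := T^\beta_{s_0} H$, which lies in $\mc S_\beta(\bb R)$ by Proposition \ref{main}, and invoke the martingale decomposition \eqref{eq212} to write
\[ \mc Y_t(G) \;=\; \mc Y_0(G) + \int_0^t \mc Y_r(\Delta_\beta G)\,dr + \mc M_t(G). \]
The time integral is continuous in $t$, and the real-valued martingale $\mc M_t(G)$ has continuous paths since its quadratic variation, dictated by \eqref{eq Ou}, is deterministic and continuous; hence $t \mapsto \mc Y_t(G)$ is continuous, which disposes of the second summand.

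For the first summand, the a.s.~continuity of $t \mapsto \mc Y_t$ in $\mc S'_\beta(\bb R)$ implies that on every compact interval $K$ the set $\{\mc Y_t : t \in K\}$ is compact in $\mc S'_\beta(\bb R)$. Because $\mc S_\beta(\bb R)$ is a Fr\'echet space, it is barrelled, and Banach--Steinhaus upgrades this compactness to equicontinuity of the family of linear functionals $\{\mc Y_t : t \in K\}$ on $\mc S_\beta(\bb R)$. Combining this equicontinuity with the convergence $T^\beta_s H \to T^\beta_{s_0} H$ in $\mc S_\beta(\bb R)$ supplied by Proposition \ref{Prop32}, the first summand tends to zero as $(s,t)\to (s_0,t_0)$ with $t$ in any fixed compact neighborhood of $t_0$.

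The main technical point is the passage from path continuity of $\mc Y_t$ in $\mc S'_\beta(\bb R)$ to equicontinuity of the family on compact time intervals, since this is where the Fr\'echet structure of $\mc S_\beta(\bb R)$ enters in an essential way; the remainder is a clean combination of Propositions \ref{main} and \ref{Prop32} with the martingale decomposition \eqref{eq212}.
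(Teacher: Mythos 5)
Your proof is correct and rests on the same two ingredients as the paper's one-line argument: Proposition \ref{Prop32} and the fact that $\mc Y_\cdot\in\mathcal{C}([0,T],\mc S'_\beta(\bb R))$. You add two things. The first, the Banach--Steinhaus step upgrading compactness of $\{\mc Y_t: t\in K\}$ in $\mc S'_\beta(\bb R)$ to equicontinuity on the barrelled space $\mc S_\beta(\bb R)$, is a genuine detail that the paper leaves implicit, and it is exactly what is needed to pass from separate to joint continuity; this is a worthwhile addition. The second, the martingale decomposition \eqref{eq212} used to handle the summand $\mc Y_t(T^\beta_{s_0}H)-\mc Y_{t_0}(T^\beta_{s_0}H)$, is an unnecessary detour: continuity of $t\mapsto\mc Y_t(G)$ for fixed $G\in\mc S_\beta(\bb R)$ is immediate from $\mc Y_\cdot\in\mathcal{C}([0,T],\mc S'_\beta(\bb R))$, since evaluation at $G$ is continuous on $\mc S'_\beta(\bb R)$. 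Moreover, the justification you give there --- that a martingale with deterministic continuous quadratic variation has continuous paths --- is not valid in general (a compensated Poisson process has deterministic continuous predictable quadratic variation and jumps); path continuity of $\mc M_t(G)$ is part of the definition of a solution of \eqref{eq Ou}, not a consequence of its bracket. This flaw is harmless here because the direct route through $\mathcal{C}([0,T],\mc S'_\beta(\bb R))$ is available, but you should replace that step accordingly.
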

\begin{proof}
This follows from  Proposition \ref{Prop32} and  the fact that $\mc Y_\cdot\in\mathcal{C}([0,T], \mc S_\beta'(\bb R))$.
\end{proof}

\begin{proposition} Fix $\beta\in[0,+\infty]$. For any $H\in \mc S_\beta(\bb R)$, the function $f:\;\bb R_+\to \bb R$ defined by $f(t)= \Vert \nabla_\beta  T^\beta_t H\Vert^2_{2,\beta}$
is continuous.
\end{proposition}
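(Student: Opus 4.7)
The plan is to reduce pointwise continuity of $f$ to the uniform continuity of $t\mapsto T^\beta_t H$ in the Fr\'echet topology of $\mc S_\beta(\bb R)$, which is exactly Proposition \ref{Prop32}. I would first split
\[
f(t) \;=\; \Vert \B T^\beta_t H\Vert_2^2 \;+\; \tfrac{1}{\alpha^2}\mathbf{1}_{\beta=1}\big(\B T^\beta_t H(0)\big)^2,
\]
and treat each summand separately. Both pieces will be handled by the same input, namely convergence in all the seminorms $\Vert\cdot\Vert_{k,\ell}$ provided by Proposition \ref{Prop32}.

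For the $L^2$-bulk, I would combine the elementary inequality $|a^2-b^2|\le (a+b)\,|a-b|$ with the bound
\[
\Vert \B G\Vert_2^2 \;\le\; \Vert G\Vert_{1,2}^2 \int_{\bb R}\frac{du}{(1+u^2)^2},
\]
valid for every $G\in\mc S_\beta(\bb R)$, the integral being a finite universal constant. Applied to $G = T^\beta_{t+\eps}H - T^\beta_t H$, the right-hand side tends to $0$ as $\eps\to 0$ by Proposition \ref{Prop32}; together with the local boundedness of $s\mapsto \Vert \B T^\beta_s H\Vert_2$ (immediate from the same seminorm bound) this yields continuity of the first summand.

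For the boundary term, which appears only when $\beta=1$, I would invoke the trivial estimate $|\B G(0)| = \big|(dG/du)(0^+)\big| \le \Vert G\Vert_{1,0}$, valid by continuity from the right of $dG/du$, applied once more to $G=T^\beta_{t+\eps}H - T^\beta_t H$; continuity of squaring then closes the argument. I do not anticipate any real obstacle here: the whole thing is essentially book-keeping, the key observation being that Proposition \ref{Prop32} delivers convergence in \emph{every} seminorm $\Vert\cdot\Vert_{k,\ell}$ simultaneously, which controls both the bulk $L^2$-piece and the boundary point-value at $0^+$ that is relevant in the Robin case.
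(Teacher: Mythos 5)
Your argument is correct and follows essentially the same route as the paper: both reduce the continuity of $f$ to Proposition \ref{Prop32} and use the weight $(1+x^2)$ to dominate the $L^2$ (and, for $\beta=1$, the point-value at $0$) contribution by the seminorms $\Vert\cdot\Vert_{1,0}$ and $\Vert\cdot\Vert_{1,2}$. If anything, your treatment of the difference of squares via $|a^2-b^2|\le(a+b)|a-b|$ together with local boundedness is more careful than the printed proof, which applies the reverse triangle inequality to $\Vert\cdot\Vert_{2,\beta}^2$ as though the square of a norm were itself a norm.
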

\begin{proof}
Let $g(x)=1+ x^2$ and $s,t\geq 0$.
Recall the inequality $\big| \Vert a\Vert -\Vert b\Vert \big|\leq \Vert a-b\Vert$, which is true for any norm $\Vert \cdot\Vert$. Applying this result to the norm $\Vert \cdot \Vert_{2,\beta}^2$, we get
\[
\begin{split}
|f(t)-f(s)|& \;= \;\Big|
\Vert \nabla_\beta  T^\beta_t H \Vert^2_{2,\beta} - \Vert \nabla_\beta  T^\beta_s H\Vert^2_{2,\beta}\Big| \\
& \;\leq \;
\Vert \nabla_\beta  T^\beta_t H - \nabla_\beta  T^\beta_s H\Vert^2_{2,\beta} \\
&\;=\;   \Big\Vert \frac{1}{g}  \cdot g \Big(\nabla_\beta  T_t^\beta H - \nabla_\beta  T_s^\beta H\Big)\Big\Vert^2_{2,\beta} \\
& \;\leq \; \Vert 1/g\Vert_{2,\beta} ^2 \;\cdot\;\sup_{x\in \bb R} \Big\vert (1+x^2) \Big(\nabla_\beta  T_t^\beta H(x) - \nabla_\beta  T_s^\beta H(x)\Big) \Big\vert\\
& \;\leq \; \Vert 1/g\Vert_{2,\beta} ^2\;\cdot\;\Big\{\Vert   T_t^\beta H -  T_s^\beta H \Vert_{1,0} + \Vert   T_t^\beta H -  T_s^\beta H\Vert_{1,2} \Big\}\,,
\end{split}
\] 
according to \eqref{eq2.2}. Since $\Vert 1/g\Vert_{2,\beta}^2<+\infty$ and recalling Proposition \ref{Prop32}, we conclude that $f$ is a continuous function.
\end{proof}

\section*{Acknowledgements}
The authors thank to C\'edric Bernardin for pointing out  the issues concerning this paper and also for valuable discussions on the subject.

This work was partially supported by FCT/Portugal through the project UID/MAT/04459/2013. 
A. Neumann is partially supported by FAPERGS (proc. 002063-2551/13-0) and T. Franco is partially supported by FAPESB (Jovem Cientista 9922/2015).

\bibliographystyle{plain}
\bibliography{bibliography}

%
%
%
%
%
%
%
%
%
%
%
%
%
%
%
%
%
%
%
%
%
%

\end{document}